\newcommand{\ol}{\overline}
\newcommand{\pa}{\partial}
\DeclareMathOperator\tr{tr}
\DeclareMathOperator\id{id}
\DeclareMathOperator\rank{rank}
\DeclareMathOperator\Ric{Ric}
\DeclareMathOperator\Sca{Sca}
\DeclareMathOperator\dist{dist}
\DeclareMathOperator\vol{vol}
\DeclareMathOperator\dvol{dvol}
\DeclareMathOperator\End{End}
\DeclareMathOperator\Div{div}
\DeclareMathOperator\GL{GL}
\DeclareMathOperator\SL{SL}
\begin{document}
\newcounter{remark}
\newcounter{theor}
\setcounter{remark}{0}
\setcounter{theor}{1}
\newtheorem{claim}{Claim}[section]
\newtheorem{theorem}{Theorem}[section]
\newtheorem{lemma}{Lemma}[section]
\newtheorem{corollary}{Corollary}[section]
\newtheorem{corollarys}{Corollary}
\newtheorem{proposition}{Proposition}[section]
\newtheorem{question}{Question}[section]
\newtheorem{defn}{Definition}[section]
\newtheorem{examp}{Example}[section]
\newtheorem{assumption}{Assumption}[section]
\newtheorem{rem}{Remark}[section]
\newtheorem*{theorem1}{Theorem}
\newtheorem*{maintheorem}{Main Theorem}
\newtheorem*{maintheorem2}{Main Theorem \uppercase\expandafter{\romannumeral2}}
\numberwithin{equation}{section}
\title[Liouville theorems for harmonic 1-forms on gradient Ricci solitons]
{Liouville theorems for harmonic 1-forms \\ on gradient Ricci solitons}
\author{Chenghong He}
\address{Chenghong He, School of Mathematics and Statistics, Nanjing University of Science and Technology, Nanjing 210094, People's Republic of China}
\email{Hech@njust.edu.cn}
\author{Di Wu}
\address{Di Wu, School of Mathematics and Statistics, Nanjing University of Science and Technology, Nanjing 210094, People's Republic of China}
\email{wudi@njust.edu.cn}
\author{Xi Zhang}
\address{Xi Zhang, School of Mathematics and Statistics, Nanjing University of Science and Technology, Nanjing 210094, People's Republic of China}
\email{mathzx@njust.edu.cn}
\subjclass[]{35B53, 53C25.}
\keywords{Liouville theorem, Harmonic 1-form, Gradient Ricci soliton, $\SL(r,\mathbb{C})$-flat vector bundle.}
\thanks{The research is supported by the National Key R\&D Program of China 2020YFA0713100. The authors are partially supported by the National Natural Science Foundation of China No.12141104 and No.12431004. Di Wu is also supported by the project funded by China Postdoctoral Science Foundation 2023M731699, the Jiangsu Funding Program for Excellent Postdoctoral Talent 2022ZB282, and the Natural Science Foundation of Jiangsu Province No.BK20230902.}
\thanks{Di Wu would like to thank Shiyu Zhang for several useful discussions.}
\maketitle
\begin{abstract}
We prove that there is no nontrivial $L^2$-integrable harmonic 1-form on noncompact complete gradient steady Ricci solitons or noncompact complete gradient shrinking K\"{a}hler-Ricci solitons. As an application, it can be used to distinguish certain flat vector bundles that arise from fundamental group representations into $\SL(r,\mathbb{C})$.
\end{abstract}
%\tableofcontents
\section{Introduction}
As is well-known, the classical Liouville theorem for harmonic functions states that there is no nontrivial positive harmonic function on $\mathbb{R}^n$. Using the logarithmic gradient estimate, it is Yau \cite{Ya1975} who successfully extended this theorem to the context of noncompact complete manifolds with nonnegative Ricci curvatures nearly fifty years ago. Yau \cite{Ya1976} also proved that any nonnegative $L^p$-integrable sub-harmonic function on a complete manifold must be a constant when $p>1$, while Li-Schoen \cite{LS1984} obtained similar result on noncompact complete manifolds with nonnegative Ricci curvatures when $p\in(0,1]$. In particular, there is no nontrivial harmonic function with finite energy on a complete manifold with nonnegative Ricci curvature. Since then the subject has witnessed a tremendous growth, the reader may consult \cite{CW2019,Li2006} for the comprehensive exposition of Liouville properties for harmonic functions as well as related topics.
\par It is interesting to study various Liouville type properties for many other partial differential equations. Motivated by Liouville theorems for harmonic functions on gradient Ricci solitons studied by Munteanu-Se\v{s}um \cite{MS2013}, we want to investigate Liouville theorems for harmonic 1-forms on gradient Ricci solitons. A noncompact complete gradient Ricci soliton $(M,g,f)$ consists of a noncompact complete Riemannian manifold $(M,g)$, a smooth function $f$ (called the potential function), and a constant $\rho$ such that
\begin{equation}\begin{split}
\Ric_g+\nabla df=\rho g,
\end{split}\end{equation}
where we denote by $\nabla$ the Levi-Civita connection. After rescaling the metric we may assume that $\rho\in\{-\frac{1}{2},0,\frac{1}{2}\}$. It is called shrinking if $\rho=\frac{1}{2}$, steady if $\rho=0$ and expanding if $\rho=-\frac{1}{2}$. Clearly, a gradient Ricci soliton is simply an Einstein manifold if $f$ is a constant function. Gradient Ricci solitons are not only the generalization of Einstein manifolds but also arise often as singularity models of the Ricci flow and that is why underlying them is an important question in the field. 
\par For a gradient Ricci soliton $(M,g,f)$, the drifted Laplacian and Bakry-\'{E}mery Ricci curvature are given by
\begin{equation}\begin{split}
		\Delta_{g,f}=\Delta_g-<\nabla f,\bullet>,\ \Ric_{g,f}=\Ric_g+\nabla df.
\end{split}\end{equation}
Since by definition gradient steady Ricci solitons or gradient shrinking Ricci solitons have nonnegative Bakry-\'{E}mery Ricci curvatures, they sometimes behave like Riemannian manifolds with nonnegative Ricci curvatures. So it is quite natural to ask whether many known Liouville type properties on noncompact complete manifolds with nonnegative Ricci curvatures can be extended to the context of noncompact complete gradient steady Ricci solitons or noncompact complete gradient shrinking Ricci solitons.
\par There are many works studying various Liouville type properties for $\Delta_{g,f}$-harmonic objects on gradient steady or shrinking Ricci solitons (or more generally, smooth metric measure spaces with Bakry-\'{E}mery Ricci curvatures bounded from below), see \cite{Br2013,GZ2018,MW2011,Na2010,PRS2011,WW2009,WW2015} and references therein. It should be mentioned that the issue for standard harmonic objects sometimes seems more delicate and many results remain unclearly. In this direction, Munteanu-Sesum \cite{MS2013} obtained the corresponding result for harmonic functions with finite energies. The issue still deserves further research for harmonic functions with point-wise conditions, see \cite{MO2024,MW2014}.
\par The present paper investigates the aforementioned issue for harmonic 1-forms, where a differential form is called harmonic if it is both closed and co-closed. The main results are stated as follows and they generalize well-known results on noncompact complete manifolds with nonnegative Ricci curvatures, see \cite{GW1981,Ya1976} for example.
\begin{theorem}\label{1formsteady}
There is no nontrivial $L^2$-integrable harmonic 1-form on noncompact complete gradient steady Ricci solitions.
\end{theorem}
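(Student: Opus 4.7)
The plan is a weighted Bochner argument coupled with a careful integration-by-parts identity that uses both the closedness and co-closedness of $\omega$. Starting from the classical Bochner formula for a harmonic 1-form together with the steady soliton equation $\Ric_g=-\nabla df$,
\[
\tfrac{1}{2}\Delta_g|\omega|^2=|\nabla\omega|^2+\Ric_g(\omega^\sharp,\omega^\sharp)=|\nabla\omega|^2-\nabla df(\omega^\sharp,\omega^\sharp),
\]
I would multiply by a standard compactly supported cutoff $\phi_R^2$ (with $\phi_R\equiv 1$ on the geodesic ball $B_R(p)$, $\phi_R\equiv 0$ outside $B_{2R}(p)$, and $|\nabla\phi_R|\le C/R$), integrate, and absorb the cross term via Cauchy--Schwarz to obtain
\[
(1-\epsilon)\int_M\phi_R^2|\nabla\omega|^2\,\dvol\le\int_M\phi_R^2\,\nabla df(\omega^\sharp,\omega^\sharp)\,\dvol+\frac{C_\epsilon}{R^2}\int_M|\omega|^2\,\dvol.
\]

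The crucial step is to rewrite the Hessian term as a scalar-curvature term. Writing $\nabla df(\omega^\sharp,\omega^\sharp)=\omega^i\omega^j\nabla_i\nabla_j f$ and integrating by parts once in $\nabla_i$, the piece $(\nabla_i\omega^i)\omega^j\nabla_j f$ vanishes since $\omega$ is co-closed, while $\omega^i\nabla_i\omega^j$ collapses to $\tfrac12\nabla^j|\omega|^2$ since $\omega$ is closed. A further integration by parts on $\langle\nabla|\omega|^2,\nabla f\rangle$, combined with the trace identity $\Delta_g f=-\Sca_g$, yields
\[
\int_M\phi_R^2\,\nabla df(\omega^\sharp,\omega^\sharp)\,\dvol=-\tfrac{1}{2}\int_M\phi_R^2\,\Sca_g|\omega|^2\,\dvol+E(R),
\]
where $E(R)$ collects terms supported on $\{\nabla\phi_R\neq 0\}$ of the form $|\omega|^2|\nabla f||\nabla\phi_R|$. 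These go to zero as $R\to\infty$: the $L^2$ hypothesis shrinks the annular integral, and Hamilton's identity $|\nabla f|^2+\Sca_g=\mathrm{const}$ combined with the positivity $\Sca_g\ge 0$ on steady solitons (B.-L.~Chen) gives a global bound on $|\nabla f|$.

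Letting $R\to\infty$ and then $\epsilon\to 0$ produces
\[
\int_M|\nabla\omega|^2\,\dvol+\tfrac12\int_M\Sca_g|\omega|^2\,\dvol\le 0.
\]
Both integrands are nonnegative, so $\omega$ is parallel and $\Sca_g|\omega|^2\equiv 0$. If $\omega\not\equiv 0$, then $|\omega|$ is a positive constant; this forces $\Sca_g\equiv 0$, hence by Hamilton's strong maximum principle $(M,g)$ is Ricci-flat, and the Calabi--Yau linear lower bound on volume growth of complete noncompact manifolds with nonnegative Ricci curvature yields $\vol(M)=\infty$, contradicting $|\omega|^2\in L^1(M)$. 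Therefore $\omega\equiv 0$.

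The main obstacle is the integration-by-parts identity producing the scalar curvature term: both the closed and co-closed conditions on $\omega$ are needed simultaneously to collapse the Hessian integral cleanly, and the vanishing of the remainder terms under the mere $L^2$ hypothesis hinges on the global boundedness of $|\nabla f|$, which is not obvious without invoking Chen's scalar-curvature positivity together with Hamilton's identity.
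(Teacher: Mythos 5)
Your proposal is correct and follows essentially the same route as the paper: both integrate the Hessian term by parts twice (using co-closedness to kill the divergence term and closedness to collapse $\omega^i\nabla_i\omega^j$ to $\tfrac12\nabla^j|\omega|^2$), convert $\Delta_g f$ into $\Sca_g\ge 0$, control the cutoff errors by the uniform bound $|\nabla f|\le\sqrt{\lambda}$ from Hamilton's identity, and finish with an infinite-volume argument against the $L^2$ hypothesis. The only cosmetic difference is at the end: you pass through $\Sca_g\equiv 0\Rightarrow$ Ricci-flat $\Rightarrow$ Calabi--Yau volume growth, whereas the paper directly invokes the Munteanu--Se\v{s}um linear lower bound $\vol_g(B_p(r))\ge r/c_1$ valid for all steady solitons, which avoids that detour.
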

\begin{theorem}\label{1formshrink}
There is no nontrivial $L^2$-integrable harmonic 1-form on noncompact complete gradient shrinking K\"{a}hler-Ricci solitions.
\end{theorem}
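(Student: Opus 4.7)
The plan is to combine the drifted Bochner identity for $|\omega|^2$ with the rigidity that the Kähler structure imposes on the $(1,0)$-part of a harmonic 1-form, and to close the argument by a weighted cut-off integration. First I would derive the identity
\begin{equation*}
\tfrac{1}{2}\Delta_{g,f}|\omega|^2 \;=\; |\nabla\omega|^2 + \tfrac{1}{2}|\omega|^2 - \langle\omega, dh\rangle,
\end{equation*}
valid on any gradient shrinking Ricci soliton for a harmonic 1-form $\omega$, where $h:=\omega(\nabla f)=i_{\nabla f}\omega$. This follows directly from the standard Bochner formula for 1-forms, the soliton equation $\Ric_g+\nabla df=\tfrac{1}{2}g$, and the hypothesis that $d\omega = d^*\omega = 0$. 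The pairing $\langle\omega,dh\rangle$ is the source of difficulty: pointwise $|h|\le|\nabla f||\omega|$ with $|\nabla f|^2\le f$ unbounded on a shrinker, so $h$ cannot be controlled by $|\omega|$ alone.

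The key Kähler input enters next. Writing $\omega = \theta+\overline{\theta}$ with $\theta = \omega^{1,0}$, the Kähler Hodge identities together with the harmonicity of $\omega$ force $\theta$ to be a closed holomorphic $(1,0)$-form. On a gradient Kähler-Ricci soliton the vanishing of the $(2,0)$-part of the soliton equation is equivalent to $\nabla^{1,0}f$ being a holomorphic vector field, so the contraction $H:=\theta(\nabla^{1,0}f)$ is a holomorphic function, $h = H+\overline{H}$ is pluriharmonic, and in particular $|H|^2$ is plurisubharmonic with $\Delta|H|^2 = 2|\partial H|^2\ge 0$.

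I would then multiply the Bochner identity by $\phi^2$ for a cut-off $\phi$ and integrate against the weighted measure $e^{-f}dV$. Using self-adjointness of $\Delta_{g,f}$ for $e^{-f}dV$, together with the integration-by-parts identity
\begin{equation*}
\int\phi^2\langle\omega,dh\rangle\, e^{-f}dV \;=\; \int\phi^2 h^2\, e^{-f}dV - 2\int\phi\, h\,\langle\nabla\phi,\omega\rangle\, e^{-f}dV
\end{equation*}
(which uses $d^*\omega=0$ and $h=i_{\nabla f}\omega$), one arrives at an inequality of the shape
\begin{equation*}
\int\phi^2\bigl(|\nabla\omega|^2 + \tfrac{1}{2}|\omega|^2 - h^2\bigr)\, e^{-f}dV \;\le\; (\text{terms linear in }\nabla\phi).
\end{equation*}
The main obstacle is to absorb $\int\phi^2 h^2\, e^{-f}dV$; this is precisely where the Kähler input is essential. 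Using $h^2\le 4|H|^2$ together with a parallel cut-off integration by parts against the subharmonic function $|H|^2$, combined with a pointwise estimate relating $|\partial H|$ to $|\nabla\omega|$ and $|\omega|$ (which follows from $H=\theta(\nabla^{1,0}f)$ and the Kähler-Ricci soliton equation $f_{\alpha\bar\beta} = \tfrac{1}{2}g_{\alpha\bar\beta}-R_{\alpha\bar\beta}$), the $h^2$ integral is absorbed into a small multiple of $\int\phi^2(|\nabla\omega|^2+|\omega|^2)\, e^{-f}dV$.

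Finally, taking $\phi=\phi_R$ the standard cut-off with $\phi_R\equiv 1$ on $B_{R/2}$, $\phi_R\equiv 0$ outside $B_R$ and $|\nabla\phi_R|\le C/R$, the Cao-Hamilton growth estimate $f(x)\sim\tfrac{1}{4}d(x,x_0)^2$ on gradient shrinking Ricci solitons, the polynomial volume growth of geodesic balls due to Cao-Zhou, and the uniform boundedness of $fe^{-f}$ ensure that the cut-off errors vanish as $R\to\infty$ once $\omega\in L^2(dV)$ (which yields $\omega\in L^2(e^{-f}dV)$ since $e^{-f}\le 1$ whenever $f\ge 0$). This forces $\int_M|\omega|^2\, e^{-f}dV = 0$ and hence $\omega\equiv 0$.
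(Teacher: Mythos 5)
Your overall architecture (drifted Bochner formula, the K\"ahler rigidity making $\theta=\omega^{1,0}$ holomorphic and $h=\omega(\nabla f)$ pluriharmonic, weighted cut-off integration) is close in spirit to the paper's, but there are two gaps, one of which is serious. First, a minor one: holomorphicity of $\theta$ does \emph{not} follow from ``the K\"ahler Hodge identities together with the harmonicity of $\omega$'' alone. On a noncompact K\"ahler manifold a harmonic $1$-form need not split into holomorphic and antiholomorphic parts: on $\mathbb{C}^2$ the form $d(|z_1|^2-|z_2|^2)$ is harmonic but $\bar\partial$ of its $(1,0)$-part is nonzero. One genuinely needs the $L^2$ (in fact $o(r^2)$ energy growth) hypothesis and a cut-off integration by parts; the paper isolates exactly this as Proposition \ref{pluriharmonic}. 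Since your $\omega$ is $L^2$ the conclusion is salvageable, but the justification you give is not a proof.

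The serious gap is the absorption of $\int\phi^2h^2e^{-f}dV$. You propose to use $h^2\le 4|H|^2$, plurisubharmonicity of $|H|^2$, and ``a pointwise estimate relating $|\partial H|$ to $|\nabla\omega|$ and $|\omega|$''. No such estimate holds: writing $H=\theta(\nabla^{1,0}f)$ and differentiating, $\partial_iH=(\nabla_i\theta_j)(\nabla^{1,0}f)^j+\theta_jg^{j\bar k}f_{i\bar k}$ with $f_{i\bar k}=\tfrac12 g_{i\bar k}-R_{i\bar k}$, so $|\partial H|$ is controlled only by $|\nabla\theta|\,|\nabla f|+(1+|\Ric|)|\theta|$, and both $|\nabla f|$ and $|\Ric|$ are in general unbounded on a shrinker. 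Moreover, even granting such an estimate, subharmonicity of $|H|^2$ integrated against $\phi^2e^{-f}$ produces no small constant in front of $\int\phi^2|H|^2e^{-f}dV$, so there is no mechanism by which the $h^2$ term becomes ``a small multiple'' of the good terms. The paper avoids this entirely: it shows $F=h$ is pluriharmonic, runs a cut-off energy estimate on the harmonic function $F$ itself (using $|F|\le|\nabla f|\,|\alpha|$ and the linear growth of $|\nabla f|$ against the $L^2$ bound on $\alpha$) to conclude $F$ is \emph{constant}, and then uses that $f$ attains its minimum to force $F\equiv 0$; only after the troublesome term is identically zero does it invoke the drifted Bochner inequality $2\Delta_{g,f}|\alpha|\ge|\alpha|$ and the weighted Yau-type Liouville theorem. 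You should replace your absorption step with an argument of this kind; as written the proof does not close.
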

It is known that the Liouville type property for harmonic functions can be used to investigate the geometry of complete manifolds at infinity. In this paper, we observe that the Liouviile type property of harmonic 1-forms can be used to study the fundamental group representations of flat vector bundles. In fact, Theorems \ref{1formsteady}, \ref{1formshrink} are applied to distinguish certain flat vector bundles that arise from fundamental group representations into $\SL(r,\mathbb{C})$, see Section 3. This is another motivation of our research. 
\section{Liouville theorems for harmonic 1-forms}
Let $(M,g,f)$ be a noncompact complete gradient Ricci soliton. In \cite{Ha1995}, Hamilton showed that the scalar curvature $\Sca_g$ satisfies
\begin{equation}\begin{split}\label{GRS}
		\Sca_g+|\nabla f|^2-2\rho f=\lambda,
\end{split}\end{equation}
for a constant $\lambda$. If $\rho=0$ (that is $\Ric_g=\nabla df$), since every steady soliton is in particular an ancient solution to the Ricci flow, we have (see \cite{Ca2010,Ch2009})
\begin{equation}\begin{split}\label{steady1}
		\Sca_g\geq0,\ \lambda\geq0,\ |\nabla f|\leq\sqrt{\lambda}.
\end{split}\end{equation}
On the other hand, Munteanu-Se\v{s}um \cite{MS2013} proved the volume growth
\begin{equation}\begin{split}\label{steady2}
		\frac{r}{c_1}\leq\vol_g(B_p(r))\leq c_1e^{\sqrt{r}},\ r>r_0,
\end{split}\end{equation}
for two constants $c_1$, $r_0$.
Next let us consider the case $\rho=\frac{1}{2}$ (that is $\Ric_g+\nabla df=\frac{1}{2}\rho g$), after suitable change of $f$ we may assume 
\begin{equation}\begin{split}\label{shrinking1}
		\Sca_g+|\nabla f|^2=f.
\end{split}\end{equation}
It is known from \cite{Ca2010,Ch2009} that $\Sca_g\geq0$. On the other hand, Cao-Zhou \cite{CZ2010} proved that for any fixed $p\in M$, we have
\begin{equation}\begin{split}\label{shrinking2}
		\frac{1}{4}(\dist(\bullet,p)-c_2)^2\leq f\leq\frac{1}{4}(\dist(\bullet,p)+c_2)^2,
\end{split}\end{equation}
for a constant $c_2$. In particular, it holds
\begin{equation}\begin{split}\label{shrinking3}
		|\nabla f|\leq\frac{1}{2}(\dist(\bullet,p)+c_2).
\end{split}\end{equation}
\par In the following, we let $\alpha$ be a $L^2$-integrable harmonic 1-form on $M$ and $\alpha^{\#}\in \Gamma(TM)$ be the dual of $\alpha$ via $g$. We also choose $\eta: M\rightarrow[0,1]$ to be a cut-off function such that $\eta=1$ on a $B_p(r)$, $\eta=0$ outside $B_p(2r)$ and $|\nabla\eta|\leq Cr^{-1}$ for a constant $C$. 
\begin{proof}[\textup{\textbf{Proof of Theorem \ref{1formsteady}}}]
Firstly, we apply the divergence theorem to get
\begin{equation}\begin{split}\label{Stokes1}
0&=\int_M\Div(\eta^2<df,\alpha>\alpha^{\#})\dvol_g
\\&=\int_M<df,\alpha><\nabla\eta^2,\alpha^{\#}>\dvol_g+\int_M\eta^2<<\nabla df,\alpha>,\alpha>\dvol_g
\\&+\int_M\eta^2<<df,\nabla\alpha>,\alpha>\dvol_g
+\int_M\eta^2<df,\alpha>\Div\alpha^{\#}\dvol_g.
\end{split}\end{equation}
%Let $\{e_\alpha\}$ be a local frame and $\{e^\alpha\}$ be a local co-frame of $E$ respectively such that on a neighborhood $U\subset M$ near the considered point, it holds
%\begin{equation}\begin{split}
%		\psi_H=A_\alpha^\beta\otimes e^\alpha\otimes e_\beta,
%	\end{split}\end{equation}
%\begin{equation}\begin{split}
%		\nabla_H\psi_H
%%		&=\nabla A_\alpha^\beta e^\alpha\otimes e_\beta-A_\alpha^\beta\otimes B^\alpha_\gamma e^\gamma\otimes e_\beta+A_\alpha^\beta e^\alpha\otimes B_\beta^\gamma e_\gamma
%		&=(\nabla A_\alpha^\beta-A^\beta_\gamma\otimes B^\gamma_\alpha+A^\gamma_\alpha\otimes B_\gamma^\beta)e^\alpha\otimes e_\beta,
%\end{split}\end{equation}
%where $A_\alpha^\beta\in A^1(U)$, $\nabla_He_\alpha=B_\alpha^\beta e_\beta$ for $B_\alpha^\beta\in A^1(U)$ and $\alpha,\beta,\gamma=1,2,...,\rank(E)$. 
Since $\alpha$ is harmonic, we have 
\begin{equation}\label{harmonicdivergence}
	\begin{split}
		\Div\alpha^{\#}=d^\ast\alpha=0.
	\end{split}
\end{equation}
Using the steady soliton equation $\Ric_g=\nabla df$, we get
\begin{equation}\label{RSS}
	\begin{split}
<<\nabla df,\alpha>,\alpha>
&=<<\Ric_g,\alpha>,\alpha>
=\Ric_g(\alpha^{\#},\alpha^{\#}).
	\end{split}
\end{equation}
For any two vector fields $X,Y$, we have $0=d\alpha(X,Y)
=\nabla_{X}\alpha(Y)
-\nabla_{Y}\alpha(X)$. Hence
\begin{equation}\label{symmetry2}
	\begin{split}
		<<df,\nabla\alpha>,\alpha>
		=<<\nabla\alpha,\alpha>,df>.
	\end{split}
\end{equation}
Using $(\ref{harmonicdivergence})$, $(\ref{RSS})$, $(\ref{symmetry2})$ on $(\ref{Stokes1})$, we arrive at
\begin{equation}\begin{split}\label{A1}
		0&=\int_M<df,\alpha><\nabla\eta^2,\alpha^{\#}>\dvol_g+\int_M\eta^2\Ric_g(\alpha^{\#},\alpha^{\#})\dvol_g
		\\&+\int_M\eta^2<<\nabla\alpha,\alpha>,df>\dvol_g.
\end{split}\end{equation}
\par Secondly, applying the divergence theorem again implies
\begin{equation}\begin{split}\label{A2}
		0&=\int_M\Div(\eta^2|\alpha|^2\nabla f)\dvol_g
		\\&=2\int_M\eta^2<<\nabla\alpha,\alpha>,df>\dvol_g+\int_M\eta^2|\alpha|^2\Delta_gf\dvol_g
		\\&+\int_M|\alpha|^2<\nabla\eta^2,\nabla f>\dvol_g.
\end{split}\end{equation}
Now using the Kato's inequality, we have 
\begin{equation}\begin{split}\label{A3}
\Delta_g|\alpha|^2
&=2\Ric_g(\alpha^{\#},\alpha^{\#})+2|\nabla\alpha|^2
\geq2\Ric_g(\alpha^{\#},\alpha^{\#})+2|\nabla|\alpha||^2.
\end{split}\end{equation}
\par Gathering $(\ref{A1})$, $(\ref{A2})$, $(\ref{A3})$, we deduce 
\begin{equation}\begin{split}
&2\int_M\eta^2|\nabla|\alpha||^2\dvol_g
\\&\leq-\int_M<\nabla\eta^2,\nabla|\alpha|^2>\dvol_g-2\int_M\eta^2\Ric_g(\alpha^{\#},\alpha^{\#})\dvol_g
\\&=-\int_M<\nabla\eta^2,\nabla|\alpha|^2>\dvol_g+2\int_M<df,\alpha><\nabla\eta^2,\alpha^{\#}>\dvol_g
\\&+2\int_M\eta^2<<\nabla\alpha,\alpha>,df>\dvol_g
\\&=-\int_M<\nabla\eta^2,\nabla|\alpha|^2>\dvol_g+2\int_M<df,\alpha><\nabla\eta^2,\alpha^{\#}>\dvol_g
\\&-\int_M\eta^2|\alpha|^2\Delta_gf\dvol_g-\int_M|\alpha|^2<\nabla\eta^2,\nabla f>\dvol_g
\\&\leq4\int_M\eta|\nabla\eta||\alpha||\nabla|\alpha||\dvol_g+C_1\int_M|\nabla\eta||\nabla f||\alpha|^2\dvol_g
\\&-\int_M\eta^2|\alpha|^2\Sca_g\dvol_g,
\end{split}\end{equation}
for a constant $C_1$. The Cauchy-Schwarz inequality gives
\begin{equation}\begin{split}
	4\int_M\eta|\nabla\eta||\alpha||\nabla|\alpha||\dvol_g
	&\leq\int_M\eta^2|\nabla|\alpha||^2\dvol_g+4\int_M|\nabla\eta|^2|\alpha|^2\dvol_g.
\end{split}\end{equation}
The above two inequalities imply
\begin{equation}\begin{split}
&\int_M\eta^2|\nabla|\alpha||^2\dvol_g+\int_M\eta^2|\alpha|^2\Sca_g\dvol_g
\\&\leq4\int_M|\nabla\eta|^2|\alpha|^2\dvol_g
+C_1\int_M|\nabla\eta||\nabla f||\alpha|^2\dvol_g
\\&\leq r^{-1}C(4+C_1\sqrt{\lambda})\int_{B_{p}(2r)\setminus B_p(r)}|\alpha|^2\dvol_g,
\end{split}\end{equation}
where we have used the uniform bound of $|\nabla f|$ (see $(\ref{steady1})$) and assumed $r$ large enough. By letting $r$ goes infinity and noting $\Sca_g\geq0$ (see $(\ref{steady1})$), we find $\nabla|\alpha|=0$ and thus $|\alpha|$ is constant. Since the volume is infinite (see $(\ref{steady2})$), we conclude $\alpha=0$ as $|\alpha|\in L^2$.
\end{proof}
\begin{proof}[\textup{\textbf{Proof of Theorem \ref{1formshrink}}}]
	Li's classical result in \cite{Li1990} asserts that any harmonic function on a noncompact complete K\"{a}hler manifold with suitable energy decay condition must be pluri-harmonic, it is a key point in obtaining the Liouville theorem for harmonic functions on noncompact complete gradient shrinking K\"{a}hler-Ricci solitions (see \cite{MS2013}). Along the way of our proof of Theorem \ref{1formshrink}, we need the following proposition.
\begin{proposition}\label{pluriharmonic}
	Let $\alpha$ be a harmonic 1-from on a noncompact complete K\"{a}hler manifold $(M,g)$ with $\int_{B_p(r)}|\alpha|^2\dvol_g=o(r^2)$ as $r$ goes infinity, then $\ol\pa\alpha^{1,0}=0$.
\end{proposition}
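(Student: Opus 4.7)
The plan is to realize $\alpha^{1,0}$ as a $\ol\pa$-harmonic $(1,0)$-form, observe that on $(1,0)$-forms the codifferential $\ol\pa^{*}$ vanishes automatically for bidegree reasons, and then extract the conclusion from a single cut-off integration by parts whose error term is exactly controlled by the $o(r^{2})$ hypothesis.

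First I would invoke the K\"{a}hler identity $\Delta_{d}=2\Delta_{\ol\pa}$. Since $\alpha$ is both closed and co-closed, $\Delta_{d}\alpha=0$, hence $\Delta_{\ol\pa}\alpha=0$; as $\Delta_{\ol\pa}$ preserves the $(p,q)$-bidegree, its $(1,0)$-component gives $\Delta_{\ol\pa}\alpha^{1,0}=0$. Because $\ol\pa^{*}$ lowers the antiholomorphic bidegree by one, $\ol\pa^{*}\alpha^{1,0}\in\Omega^{1,-1}(M)=0$ trivially, so the harmonicity of $\alpha^{1,0}$ collapses to the single equation $\ol\pa^{*}\ol\pa\,\alpha^{1,0}=0$.

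Next, with $\eta$ the standard cutoff already fixed in the paper (equal to $1$ on $B_{p}(r)$, supported in $B_{p}(2r)$, $|\nabla\eta|\le Cr^{-1}$), I would pair this identity against $\eta^{2}\alpha^{1,0}$, use the $L^{2}$-adjointness of $\ol\pa$ and $\ol\pa^{*}$ (legitimate since $\eta^{2}\alpha^{1,0}$ is compactly supported), and expand $\ol\pa(\eta^{2}\alpha^{1,0})=\eta^{2}\ol\pa\alpha^{1,0}+2\eta\,\ol\pa\eta\wedge\alpha^{1,0}$. This produces
\begin{equation*}
\int_{M}\eta^{2}|\ol\pa\alpha^{1,0}|^{2}\dvol_{g} = -2\int_{M}\eta\,\langle\ol\pa\alpha^{1,0},\ol\pa\eta\wedge\alpha^{1,0}\rangle\dvol_{g},
\end{equation*}
and a Cauchy--Schwarz absorption of half of the left-hand side yields
\begin{equation*}
\int_{M}\eta^{2}|\ol\pa\alpha^{1,0}|^{2}\dvol_{g} \le C\int_{M}|\nabla\eta|^{2}|\alpha|^{2}\dvol_{g} \le \frac{C'}{r^{2}}\int_{B_{p}(2r)}|\alpha|^{2}\dvol_{g}.
\end{equation*}
The growth assumption $\int_{B_{p}(r)}|\alpha|^{2}\dvol_{g}=o(r^{2})$ is precisely what is needed to send the right-hand side to $0$ as $r\to\infty$, forcing $\ol\pa\alpha^{1,0}\equiv 0$ on $M$.

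There is no genuinely hard step in this argument; it is a template weighted $L^{2}$ computation. The only delicate points are the pointwise K\"{a}hler identity $\Delta_{d}=2\Delta_{\ol\pa}$ (which is standard and requires no compactness) and the bidegree vanishing $\ol\pa^{*}\alpha^{1,0}=0$, which is what allows the whole argument to run without needing any separate control on $|\ol\pa^{*}\alpha^{1,0}|$. Consequently the $o(r^{2})$ growth hypothesis plays the sole substantive role: it is calibrated exactly to extinguish the unique cut-off error term that survives.
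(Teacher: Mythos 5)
Your proof is correct, and it reaches the decisive identity by a genuinely different route than the paper's. The paper never invokes the second-order K\"ahler identity $\Delta_d=2\Delta_{\ol\pa}$; instead it works with the first-order consequences of harmonicity, $\pa\alpha^{0,1}+\ol\pa\alpha^{1,0}=0$ and $\pa^\ast\alpha^{1,0}+\ol\pa^\ast\alpha^{0,1}=0$, deduces from the latter that $\ol\pa\alpha^{1,0}$ is primitive, i.e.\ $\Lambda\ol\pa\alpha^{1,0}=0$, and then produces $\int_M\eta^2|\ol\pa\alpha^{1,0}|^2\dvol_g$ by applying Stokes' theorem to the $(n,n-1)$-form $\eta^2\alpha^{1,0}\wedge\ol\pa\alpha^{1,0}\wedge\omega_g^{n-2}/(n-2)!$ combined with the pointwise Hodge--Riemann relation for primitive $(1,1)$-forms. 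You replace all of this with the operator identity $\Delta_d=2\Delta_{\ol\pa}$ (a local identity, so no compactness is needed), the bidegree vanishing $\ol\pa^\ast\alpha^{1,0}=0$, and a single adjointness computation against $\eta^2\alpha^{1,0}$; the cutoff then enters exactly as in the paper, with the $o(r^2)$ hypothesis extinguishing the same error term. Your version is shorter, avoids the wedge-product bookkeeping and the separate primitivity step, works uniformly in $\dim_{\mathbb{C}}M$ (the paper's $\omega_g^{n-2}$ tacitly assumes $n\geq2$), and does not use that $\alpha$ is real-valued (the paper's substitution $\pa\alpha^{0,1}=\ol{\ol\pa\alpha^{1,0}}$ does). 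The only points to make explicit in a final write-up are the ones you already flagged: the adjointness step is legitimate because $\eta^2\alpha^{1,0}$ is compactly supported, and the absorption of half of $\int_M\eta^2|\ol\pa\alpha^{1,0}|^2\dvol_g$ is legitimate because that integral is finite for each fixed $r$.
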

\begin{proof}
Set $n=\dim_{\mathbb{C}}M$ and $\omega_g$ to be the K\"{a}hler form, we have
\begin{equation}\begin{split}\label{33333}
		\int_M\eta^2\ol\pa\alpha^{1,0}\wedge\ol\pa\alpha^{1,0}\wedge\frac{\omega^{n-2}_g}{(n-2)!}
%		&=\int_M\eta^2\ol\pa(\alpha^{1,0}\wedge\ol\pa\alpha^{1,0})\wedge\frac{\omega^{n-2}_g}{(n-2)!}
		&=\int_M\eta^2\ol\pa\left(\alpha^{1,0}\wedge\ol\pa\alpha^{1,0}\wedge\frac{\omega^{n-2}_g}{(n-2)!}\right)
		\\&=-\int_M2\eta\ol\pa\eta\wedge\alpha^{1,0}\wedge\ol\pa\alpha^{1,0}\wedge\frac{\omega^{n-2}_g}{(n-2)!}
		\\&\leq C_2\int_M|\eta||\nabla\eta||\alpha^{1,0}||\ol\pa\alpha^{1,0}|\dvol_g
%		\\&\leq C_2\int_M(\epsilon\eta^2|\ol\pa\alpha^{1,0}|^2+\frac{1}{4\epsilon}|\nabla\eta|^2|\alpha^{1,0}|^2)\dvol_g
		\\&\leq C_2\int_M(\epsilon\eta^2|\ol\pa\alpha^{1,0}|^2+\frac{1}{\epsilon}|\nabla\eta|^2|\alpha|^2)\dvol_g,
\end{split}\end{equation}
for a constant $C_2$ and any constant $\epsilon>0$. Note the harmoncity implies
 \begin{equation}\begin{split}\label{11111}
 		\pa\alpha^{0,1}+\ol\pa\alpha^{1,0}=0,\ \pa^\ast\alpha^{1,0}+\ol\pa^\ast\alpha^{0,1}=0.
 \end{split}\end{equation}
Set $\Lambda$ to be the formal adjoint operator of $\omega_g\wedge\bullet$ and locally, it holds $\Lambda\beta=-\sqrt{-1}g^{\ol{j}i}\beta_{i\ol{j}}$ for $\beta=\beta_{i\ol{j}}dz^i\wedge d\ol{z^j}$, where $(g^{\ol{j}i})_{n\times n}=(g_{i\ol{j}})_{n\times n}^{-1}$ and $g_{i\ol{j}}=g(dz^i,d\ol{z^j})$. Then we deduce
\begin{equation}\begin{split}\label{22222}
2\sqrt{-1}\Lambda\ol\pa\alpha^{1,0}
&=\sqrt{-1}\Lambda\ol\pa\alpha^{1,0}-\sqrt{-1}\Lambda\pa\alpha^{0,1}
\\&=\pa^\ast\alpha^{1,0}+\ol\pa^\ast\alpha^{0,1}
\\&=0.
\end{split}\end{equation}
Using $(\ref{11111})$ and $(\ref{22222})$, a direct computation yields
\begin{equation}\begin{split}\label{44444}
\int_M\eta^2\ol\pa\alpha^{1,0}\wedge\ol\pa\alpha^{1,0}\wedge\frac{\omega^{n-2}_g}{(n-2)!}
%			\\&=\int_M\eta^2\tr(G_H^{1,1}\wedge G_H^{1,1})\wedge\frac{\omega^{n-2}_g}{(n-2)!}
%			+\int_M2\eta^2\tr(G_H^{2,0}\wedge G_H^{0,2})\wedge\frac{\omega^{n-2}_g}{(n-2)!}
&=-\int_M\eta^2\ol\pa\alpha^{1,0}\wedge\pa\alpha^{0,1}\wedge\frac{\omega^{n-2}_g}{(n-2)!}
\\&=-\int_M\eta^2\ol\pa\alpha^{1,0}\wedge\ol{\ol\pa\alpha^{1,0}}\wedge\frac{\omega^{n-2}_g}{(n-2)!}
\\&=\int_M\eta^2(|\ol\pa\alpha^{1,0}|^2-|\Lambda\ol\pa\alpha^{1,0}|^2)\dvol_g
\\&=\int_M\eta^2|\ol\pa\alpha^{1,0}|^2\dvol_g.
\end{split}\end{equation}
Taking $\epsilon=(2C_2)^{-1}$, it follows from $(\ref{33333})$ and $(\ref{44444})$ that
\begin{equation}\begin{split}
		\int_M\eta^2|\ol\pa\alpha^{1,0}|^2\dvol_g
		&\leq4(C_2)^2\int_M|\nabla\eta|^2|\alpha|^2\dvol_g
		\\&\leq4r^{-2}(CC_2)^2\int_{B_{p}(2r)\setminus B_p(r)}|\alpha|^2\dvol_g,
\end{split}\end{equation}
By letting $r$ goes infinity, we conclude $\ol\pa\alpha^{1,0}=0$.
\end{proof}
%\begin{rem}
%Proposition \ref{pluriharmonic} also holds if $d\alpha=0$, $\Lambda\ol\pa\alpha^{1,0}=0$ and $d\omega_g^{n-2}=0$.
%\end{rem}
By Proposition \ref{pluriharmonic}, it holds $\pa\alpha^{0,1}=0$, $\ol\pa\alpha^{1,0}=0$. 
%It follows for any $X,Y\in T^{1,0}M$,
%\begin{equation}\begin{split}\label{pluriharmonicuse1}
%\nabla^{1,0}_X\alpha^{0,1}(\ol{Y})=\pa_X\alpha^{0,1}(\ol{Y})+\nabla^{1,0}_{\ol{Y}}\alpha^{0,1}(X)=0,
%\end{split}\end{equation}
%\begin{equation}\begin{split}\label{pluriharmonicuse2}
%		\nabla^{0,1}_{\ol{Y}}\alpha^{1,0}(X)=\pa_{\ol{Y}}\alpha^{0,1}(X)+\nabla^{1,0}_{X}\alpha^{0,1}(\ol{Y})=0.
%\end{split}\end{equation}
From this we can obtain
\begin{equation}\begin{split}\label{pluriharmonicuse}
\nabla^{1,0}\alpha^{0,1}=0,\ \nabla^{0,1}\alpha^{1,0}=0,
\end{split}\end{equation}
The property of Ricci curvature of a K\"{a}hler manifold and $\Ric_g+\nabla df=\frac{1}{2}g$ imply 
\begin{equation}\begin{split}\label{Ricci}
\nabla^{1,0}\pa f=0,\ \nabla^{0,1}\ol\pa f=0.
\end{split}\end{equation}
We set $F=g(df,\alpha)$ and it can be written as
\begin{equation}\begin{split}
F&=g(\pa f,\alpha^{0,1})+g(\ol\pa f,\alpha^{1,0})
\triangleq F_1+F_2.
\end{split}\end{equation}
For $Z\in T^{1,0}M$, using $(\ref{pluriharmonicuse})$ and $(\ref{Ricci})$, we deduce
\begin{equation}\begin{split}
		\pa_{Z} F_1
		&=g(\nabla_{Z}\pa f,\alpha^{0,1})+g(\pa f,\nabla_Z\alpha^{0,1})
		=0,
\end{split}\end{equation}
\begin{equation}\begin{split}
		\ol\pa_{\ol{Z}} F_2
		&=g(\nabla_{\ol{Z}}\ol\pa f,\alpha^{1,0})+g(\ol\pa f,\nabla_{\ol{Z}}\alpha^{1,0})
		=0.
\end{split}\end{equation}
Therefore $\pa\ol\pa F=-\ol\pa\pa F_1+\pa\ol\pa F_2=0$ and it yields
\begin{equation}\begin{split}
\int_M|\nabla F|^2\eta^2\dvol_g
&=-\int_M\Delta_gFF\eta^2\dvol_g-\int_MF<\nabla F,\nabla\eta^2>\dvol_g
\\&=-\int_M2\sqrt{-1}\Lambda\pa\ol\pa FF\eta^2\dvol_g-\int_MF<\nabla F,\nabla\eta^2>\dvol_g
\\&\leq\int_M2\eta|F||\nabla F||\nabla\eta|\dvol_g
\\&\leq\frac{1}{2}\int_M|\nabla F|^2\eta^2\dvol_g+2\int_M|F|^2|\nabla\eta|^2\dvol_g.
\end{split}\end{equation}
From this and the fact that $|\nabla f|$ grows linearly (see $(\ref{shrinking3})$), we arrive at
\begin{equation}\begin{split}
\int_M|\nabla F|^2\eta^2\dvol_g
&\leq C_3\int_M|\nabla\eta|^2|\alpha|^2|\nabla f|^2\dvol_g
\\&\leq\frac{C_3C^2(\dist(\bullet,p)+c_2)^2}{4r^2}\int_{B_p(2r)\setminus B_p(r)}|\alpha|^2\dvol_g,
\end{split}\end{equation}
for a constant $C_3$. By letting $r$ goes infinity we conclude $\nabla F=0$ and hence $F$ is a constant. In fact, $(\ref{shrinking2})$ guarantees that it attains its minimum somewhere on
a compact subset of $M$, so $F=0$ somewhere and hence everywhere.
\par Now using $\Ric_{g,f}=\frac{1}{2}g$, $(\ref{symmetry2})$, $\nabla F=0$ and the Kato's inequality, we obtain
\begin{equation}\begin{split}\label{Bochner}
		\Delta_{g,f}|\alpha|^2
		&=2\Ric_g(\alpha^{\#},\alpha^{\#})+2|\nabla\alpha|^2-<\nabla f,\nabla|\alpha|^2>
		\\&=2\Ric_{g,f}(\alpha^{\#},\alpha^{\#})-2\nabla df(\alpha^{\#},\alpha^{\#})-2<df,<\nabla\alpha,\alpha>>+2|\nabla\alpha|^2
		\\&=|\alpha|^2
		-2<<\alpha,\nabla df>,\alpha>
		-2<df,<\nabla\alpha,\alpha>>+2|\nabla\alpha|^2
		\\&=|\alpha|^2
		-2<<\alpha,\nabla df>+<\nabla\alpha,df>,\alpha>+2|\nabla\alpha|^2
		\\&=|\alpha|^2-2<dF,\alpha>+2|\nabla\alpha|^2
		\\&\geq|\alpha|^2+2|\nabla|\alpha||^2.
\end{split}\end{equation}
On the other hand, we also have $\Delta_{g,f}|\alpha|^2=2|\alpha|\Delta_{g,f}|\alpha|+2|\nabla|\alpha||^2$. It follows
\begin{equation}\begin{split}
2\Delta_{g,f}|\alpha|\geq|\alpha|.
\end{split}\end{equation}
We find $\alpha=0$ by noting $\int_M|\alpha|^2e^{-f}\dvol_g\leq\int_M|\alpha|^2\dvol_g<\infty$ and applying Yau's Liouville theorem with respect to drifted Laplacian (see \cite{GZ2018,PRS2011,Ya1976}).
\end{proof}
\section{$\SL(r,\mathbb{C})$-flat vector bundles}
Let $(E,\nabla)$ be a complex flat vector bundle on a Riemannian manifold $(M,g)$, where $\nabla$ is a connection on $E$. Motivated by calculating the Euler characteristic number via the Gauss-Bonnet-Chern formula and its ramifications, one may ask: \textit{How to find metric compatible connections on $E$?} We may write for a Hermitian metric $H$ on $E$,
\begin{equation}\begin{split}
		\nabla=\nabla_H+\psi_{H},
\end{split}\end{equation}
where $\nabla_H$ is a connection preserving $H$ and $\psi_{H}\in A^1(\End(E))$. We define the energy
\begin{equation}\begin{split}
		\mathcal{E}_{\nabla}(H)=\frac{1}{2}\int_M|\psi_{H}|^2\dvol_g,
\end{split}\end{equation}
and the point is to minimize $\mathcal{E}_{\nabla}(H)$ when $H$ varies.	The critical point $H$ of $\mathcal{E}_{\nabla}$ is called a harmonic metric and it satisfies 
	\begin{equation}\begin{split}
			\nabla^\ast_H\psi_H=0,
	\end{split}\end{equation}
	where $\nabla^\ast_H$ denotes the formal adjoint operator of $\nabla_H$. It is known that harmonic metrics are key tools in the study of the so-called nonabelian Hodge theory in K\"{a}hler geometry, see \cite{Co1988,Do1987b,Hi1987,Si1988,Si1992}. Detecting harmonic metrics is a nonlinear system generalization of solving the Laplace equation and obstruction emerges. In fact, there is a Riemannian Kobayashi-Hitchin correspondence for harmonic metrics analogous to the Kobayashi-Hitchin correspondence \cite{Do1985,Do1987a,UY1986} in complex geometry, which was well established by Corlette \cite{Co1988} and Donaldson \cite{Do1987b} for flat vector bundles in the late 1980s, by Wu-Zhang \cite{WZ2023b} for arbitrary vector bundles recently. 
\par Consider the determinant line bundle $\det E$ endowed with the induced flat connection $\nabla^{\det E}$, whose connection form is given by the trace of that of $\nabla$. With respect to the induced metric $\det H$, we split it as 
\begin{equation}\begin{split}
		\nabla^{\det E}=\nabla^{\det E}_{\det H}+\psi^{\det E}_{\det H},
\end{split}\end{equation}
and we denote by $H_0$ the trivial Hermitian metric on $\det E$. Let $p\in M$ and $\pi_1(M)$ be the fundamental group with base point $p$. Since $F_\nabla=0$, the parallel transport along a closed curve starting at $p$ depends only on its homotopy class and it induces $\rho: \pi_1(M)\rightarrow\GL(r,\mathbb{C})$, where $r=\rank(E)$. Conversely, given $\rho: \pi_1(M)\rightarrow\GL(r,\mathbb{C})$, it corresponds to $E=\tilde{M}\times_\rho\mathbb{C}^r$ given by the universal covering space $\tilde{M}$ and the action of $[c]\in\pi_1(M)$ on $(q,v)\in\tilde{M}\times\mathbb{C}^r$ via $[c]\cdot(q,v)=([c]\cdot q,\rho([c])(v))$. Then we say that the flat vector bundle $E$ arises from the fundamental group representation $\rho$. 
\par It might be interesting to figure out more information on the representation $\rho$. On a compact Riemannian manifold with nonnegative Ricci curvature, the representation $\rho$ relative to any semi-simple complex flat vector bundle has image into $U(r)$, see \cite{WZ2023a} which also considered the noncompact case. On the other hand, recall the famous Chern conjecture predicts that any compact affine manifold has vanishing Euler characteristic. Thus the Chern conjecture holds for a compact affine manifold whose tangent bundle is semi-simple and which admits a Riemannian metric with nonnegative Ricci curvature.

As an application of the Liouviile type property in Theorems \ref{1formsteady}, \ref{1formshrink}, we are now in the position to prove the following theorem, which shows $\rho$ has image into $\SL(r,\mathbb{C})$.
\begin{theorem}\label{SL}
Let $(E,\nabla)$ be a complex flat vector bundle of rank $r$ on a noncompact complete gradient steady Ricci soliton or a noncompact complete gradient shrinking K\"{a}hler-Ricci soliton. Suppose $H$ is harmonic metric with finite energy and $\det H=H_0$, then $(E,\nabla)$ must arise from a fundamental group representation into $\SL(r,\mathbb{C})$.
\end{theorem}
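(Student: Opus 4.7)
My plan is to reduce the statement to producing an $L^{2}$-integrable harmonic 1-form on $M$ and then invoke Theorem \ref{1formsteady} or Theorem \ref{1formshrink} to force it to vanish. The image of $\rho$ lies in $\SL(r,\mathbb{C})$ if and only if the induced representation $\det\rho:\pi_1(M)\to\mathbb{C}^{\ast}$ on the determinant line bundle is trivial, equivalently the flat Hermitian bundle $(\det E,\nabla^{\det E},H_0)$ is isomorphic to the trivial one. So it suffices to work on the rank-one bundle $\det E$ with the induced connection and the prescribed metric $H_0$.

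The natural candidate 1-form is $\psi^{\det E}_{H_0}$: by tracing the decomposition $\nabla=\nabla_H+\psi_H$, one obtains $\nabla^{\det E}=\nabla^{\det E}_{H_0}+\psi^{\det E}_{H_0}$ with $\psi^{\det E}_{H_0}=\tr\psi_H$, since $\det H=H_0$. Because $\det E$ has rank one, the $H_0$-self-adjointness condition forces $\psi^{\det E}_{H_0}$ to be a \emph{real-valued} 1-form on $M$. Closedness comes from $F_{\nabla^{\det E}}=0$: in the expansion of the curvature, the quadratic wedge term vanishes on a rank-one bundle, and separating real and imaginary parts of the remaining identity isolates $d\psi^{\det E}_{H_0}=0$. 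Co-closedness follows by tracing the harmonic metric equation $\nabla^{\ast}_H\psi_H=0$, since the trace is parallel and hence commutes with the $H$-adjoint. The pointwise bound $|\tr\psi_H|^{2}\leq r\,|\psi_H|^{2}$ from Cauchy--Schwarz, combined with finite energy, gives the $L^{2}$-bound $\int_M|\psi^{\det E}_{H_0}|^{2}\dvol_g\leq 2r\,\mathcal{E}_{\nabla}(H)<\infty$.

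Theorem \ref{1formsteady} or Theorem \ref{1formshrink} then yields $\psi^{\det E}_{H_0}=0$, so $\nabla^{\det E}$ coincides with its $H_0$-unitary part $\nabla^{\det E}_{H_0}$. The step I expect to require the most care is the final identification: concluding that this $H_0$-unitary flat connection must actually be the trivial connection (so that $\det\rho=1$), rather than merely $U(1)$-valued. Here one uses the specific meaning of ``trivial'' attached to $H_0$: it is pinned down by a distinguished trivialization of $\det E$ as a smooth line bundle, and the combination of $H_0$-unitarity, flatness, and this choice of trivialization forces the connection form on $\det E$ to vanish. With that identification in place, $\rho$ factors through $\SL(r,\mathbb{C})$ as desired.
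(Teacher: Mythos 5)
Your proposal follows the paper's proof essentially step for step. Both arguments identify $\psi^{\det E}_{H_0}=\tr\psi_H$ as the candidate $1$-form, check that it is real (self-adjointness), closed (the Hermitian part of the flatness identity; the quadratic term disappears upon tracing, which is your rank-one observation in disguise), co-closed (trace of the harmonic metric equation $\nabla_H^\ast\psi_H=0$), and $L^2$ (Cauchy--Schwarz against the finite-energy hypothesis), and then invoke Theorems \ref{1formsteady} and \ref{1formshrink} to force $\tr\psi_H=0$. The only difference is the order of operations: you decompose the induced connection directly on $\det E$ and use rank one, whereas the paper decomposes on $E$ and then traces; these are the same computation.

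The step you single out as delicate is indeed the weak point, and your proposal does not actually close it. From $\psi^{\det E}_{H_0}=0$ one only learns that $\nabla^{\det E}$ equals its $H_0$-unitary part, i.e.\ that in the trivialization attached to $H_0$ its connection form is $\sqrt{-1}\,b$ for a real $1$-form $b$, so that $\det\rho$ takes values in $U(1)$. Concluding $\det\rho=\id$ requires the periods of $b$ to lie in $2\pi\mathbb{Z}$, and your final sentence (``unitarity, flatness and the choice of trivialization force the connection form to vanish'') asserts this rather than proves it: flatness only makes $b$ closed, unitarity says nothing about $b$ at all, and the Liouville theorems cannot be applied to $b$ since nothing makes $b$ co-closed or $L^2$ (the energy controls only the self-adjoint part of the connection). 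To be fair, the paper's own proof makes exactly the same jump (``it follows $\nabla^{\det E}=d$''), so you have reproduced its argument faithfully, this unaddressed point included; a complete proof needs an additional input here, or the conclusion should be read as saying only that $\det\rho$ is unitary.
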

\begin{proof}
Decomposing $F_\nabla$ into Hermitian part and anti-Hermitian part, we have 
	\begin{equation}\begin{split}\label{flatness1}
			D_H\psi_H=0,\ F_{\nabla_H}+\psi_H\wedge\psi_H=0,
	\end{split}\end{equation}
where $D_H$ is the exterior differential operator relative to $\nabla_H$. Together with the harmonicity of $H$, we find
\begin{equation}\begin{split}
			d\tr\psi_H=\tr D_H\psi_H=0,\ d^\ast\tr\psi_H=\tr\nabla_H^\ast\psi_H=0.
\end{split}\end{equation}
In addition, $\tr\psi_H$ is real since $\psi_H$ is self-adjoint. Applying Theorems \ref{1formsteady}, \ref{1formshrink}, we obtain $
\tr\psi_H=0$. If $(E,\nabla)$ arises form the representation $\rho$, then the induced flat line bundle $(\det E,\nabla^{\det E})$ arises from $\det\rho$. Since $\det H=H_0$ and 
\begin{equation}\begin{split}
\psi^{\det E}_{\det H}=\tr\psi_H=0,
\end{split}\end{equation}
it follows $\nabla^{\det E}=d$. Hence $(\det E,\nabla^{\det E})$ is trivial and thus $\det\rho=\id$.
%\par Hence we conclude the induced connection $\nabla^{\det E}$ on the trivial line bundle $\det E$ is the trivial one. Therefore the representation $\det\rho$ corresponding to the trivial flat line bundle $(\det E,\nabla^{\det E})$ is also trivial, that is $\det\rho=\id$.
\end{proof}

\end{document}